\newtheorem{theorem}{Theorem}
\newtheorem{lemma}[theorem]{Lemma}
\begin{document}
\vspace{5pt}
\begin{center}
\vspace{2pt}
    {\Large{Polynomials and Second Order Linear Recurrences}
 }
\\
{Soumyabrata Pal \footnote{Fourth Year student at IIT Kharagpur}   \\}
{Shankar M. Venkatesan \footnote {Bangalore, India}}
 \end{center}
 \vspace{4pt}
\begin{abstract}
One of the most interesting results of the last century was the proof completed by Matijasevich that computably enumerable sets are precisely the diophantine sets [MRDP Theorem, 9], thus settling, based on previously developed machinery, Hilbert's question whether there exists a general algorithm for checking the solvability in integers of any diophantine equation.
Also diophantine representations of the set of prime numbers were exhibited [10] involving polynomials of high degree and many variables [11]. While it is easy to show that there does not exist a univariate polynomial that evaluates to only prime numbers (not to mention all the prime numbers, the two-variable case for primes is a well known open problem [11]), so far there do not exist any results on techniques that help prove the non-existence of  multivariate polynomials of certain low degrees for interesting sets (such as the primes), or for showing lower bounds on the degree-variable trade-off for such polynomials. In this paper we describe techniques to prove the nonexistence of polynomials in two variables for some simple generalizations of the Fibonacci sequence (explicit diophantine representation of Fibonacci numbers were known from Jones' polynomial whose positive values have the same range as that of Fibonacci numbers), and we believe similar techniques exist for the primes. In this paper we mainly show the following results: (1) using one of the many techniques known for solving the Pell's equation, namely the solution in an extended number system, we prove the existence and explicitly find the polynomials for the recurrences of the form $e(n)=ae(n-1)+e(n-2)$ with starting values of 0 and 1 in particular, and for any arbitrary starting values, in the process defining a concept of fundamental starting numbers, (2) we prove a few identities that seem to be quite interesting and useful, (3) we use these identities in a novel way to generate systems of equations of certain rank deficiency using which we disprove for the first time the existence of any polynomial in 2 variables for the generalized recurrence of the form $e(n)=ae(n-1)+be(n-2)$ (even though these are obviously computably enumerable and hence diophantine), (4) using a known Cassini modification, we prove a similar non-existence for three variables. Our work raises questions about what techniques are good for establishing non-existence or for proving lower bounds on the degree and on the number of variables for the diophantine representation of these as well as other interesting sets such as primes.
\end {abstract}
\section{Introduction} 
The Fibonacci sequence 0,1,1,2,3,5,8,.... given by Italian mathematician Leonardo of Pisano has continued to produce a lot of research interest over the years. Things took a turn for the different when Matijasevich [2,3] first derived a fibonacci identity to complete the proof that computably enumerable sets are precisely the diophantine sets [MRDP Theorem, 9] thus finally settling Hilbert's tenth problem in the negative. He also showed how to construct a polynomial for the primes.
Polynomials were also exhibited to find the explicit formula for the n'th member of some sequences as well
as defining and working with Fibonacci polynomials and q-polynomials [6,7] . James P.Jones in 1975 [5] showed a very interesting result regarding the Fibonacci sequence: he showed
a polynomial namely $y(2-(x^{2}+xy-y^{2})^{2})$ whose positive values have the exact same range as the members of the Fibonacci series, 
and Jones et.al also showed a similar work for prime numbers [4]. In this paper the initial ideas are mainly influenced by Jones' work [5].
This paper also involves the well-known Pell's equation. Although Jones did not use this approach for his proof (he gave 
a completely different proof based on induction and inequalities) we have used a technique for solving Pell's equation [8], particularly one using factorization in a certain
extended number system (we believe similar approaches and factorizability exist in certain splitting fields). The result of Jones has never been extended for a generalized second order linear recurrences of the form $e(n)=ae(n-1)+be(n-2)$ :
In this paper we exhibit polynomials for recurrences of the form $e(n)=ae(n-1)+e(n-2)$ i.e b=1 and prove a few identities as well. We also extend
the formulation of these polynomials to sequences with arbitrary starting values. Last but not least, we also prove for the first time the non-existence of certain degree-2 or 2-variable 
polynomials for a general second order linear recurrence with $a>1$ and $b>1$, and of certain 3-variable polynomials using an extended Casssini's identity [12]. We have left unanswered but plan to investigate the highly interesting questions of bounds on the lowest degree or the least variable polynomials (which do exist) for our sets as well as for primes and similar interesting sets. 
\section{Analysis for the sequence where b=1}
\subsection{Recurrence relation}
Even for the relation $e(n)=ae(n-1)+be(n-2)$, the following recurrence is satisfied
\begin{theorem}
$$\text{For any positive integer i, } e(i)^{2}+ae(i+1)e(i)-{e(i+1)}^{2}=(-1)^{i+1} $$
\end{theorem}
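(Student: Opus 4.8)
The plan is to prove the identity by induction on $i$, using the recurrence $e(n)=ae(n-1)+e(n-2)$ together with the initial values $e(0)=0$, $e(1)=1$ that are fixed throughout this section. For the base case $i=1$ we have $e(1)=1$ and $e(2)=ae(1)+e(0)=a$, so the left-hand side equals $1+a\cdot a\cdot 1-a^{2}=1=(-1)^{2}$, as asserted.

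For the inductive step, assume $e(i)^{2}+ae(i+1)e(i)-e(i+1)^{2}=(-1)^{i+1}$ for some positive integer $i$, and consider the same expression at $i+1$, namely $e(i+1)^{2}+ae(i+2)e(i+1)-e(i+2)^{2}$. Substituting $e(i+2)=ae(i+1)+e(i)$ and expanding, I expect the terms involving $a^{2}e(i+1)^{2}$ to cancel and the remaining terms to reorganize exactly as $-\bigl(e(i)^{2}+ae(i+1)e(i)-e(i+1)^{2}\bigr)$, which by the inductive hypothesis equals $-(-1)^{i+1}=(-1)^{i+2}$. This closes the induction. The argument is entirely elementary; the only point requiring care is the sign bookkeeping when expanding $-(ae(i+1)+e(i))^{2}$ and verifying that the surviving cross terms collapse to precisely $-1$ times the previous quantity rather than to something off by a constant factor.

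As a cross-check, and an alternative proof, one can instead use the companion matrix $M=\left(\begin{smallmatrix}a & 1\\ 1 & 0\end{smallmatrix}\right)$, for which an easy induction gives $M^{i}=\left(\begin{smallmatrix}e(i+1) & e(i)\\ e(i) & e(i-1)\end{smallmatrix}\right)$; taking determinants yields $e(i+1)e(i-1)-e(i)^{2}=(\det M)^{i}=(-1)^{i}$, and substituting $e(i-1)=e(i+1)-ae(i)$ and multiplying through by $-1$ rearranges this into the stated identity. I would present the direct induction as the main proof and mention this determinant route only as a remark.
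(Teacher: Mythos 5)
Your proposal is correct and its main argument is exactly the paper's proof: induction on $i$ with base case $i=1$, substitution of $e(i+2)=ae(i+1)+e(i)$, and the observation that the expansion collapses to $-1$ times the inductive hypothesis. The companion-matrix remark via $\det M=-1$ is a valid and pleasant cross-check, but it does not change the substance of the argument.
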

\begin{proof}
The proof is by induction. According to our asssumption, $e(0)=0$, $e(1)=1$ and $e(2)=a$. So for $i=1$, we can observe that the theorem holds. Let us assume that 
the theorem holds for all $i<=k$. So for $i=k+1$ we have,
$$
\begin{array}{cl}
&e(k+1)^{2}+ae(k+2)e(k+1)-{e(k+2)}^{2}\\
=&e(k+1)^{2}+a(ae(k+1)+e(k))e(k+1)-{ae(k+1)+e(k)}^{2}\\
=&e(k+1)^{2}-ae(k+1)e(k)-e(k)^{2}\\
=&-(-e(k+1)^{2}+ae(k+1)e(k)+e(k)^{2})\\
=&(-1)^{k+2}
\end{array}
$$
Hence the theorem holds for $i=k+1$ and therefore the theorem is satisfied for all $i$.
\end{proof}
So $e(i)$ and $e(i+1)$ forms a pair of solutions (x,y) for the equation
$$ x^{2}+axy-y^{2}=\pm{1}$$ similar to the fibonacci sequence case where a=1. The question is whether the set (e(i),e(i+1)) forms the only solutions of the
given equation. We will show that it indeed does.
\begin{theorem}
The pair $(x,y) \rightarrow (e(2n-1),e(2n))$ forms the only solutions of the equation $x^{2}+axy-y^{2}=1$ 
\end{theorem}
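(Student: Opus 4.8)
The plan is to exploit the multiplicative structure behind Theorem 1. Work in the ring $R = \mathbb{Z}[\theta]$, where $\theta = \frac{a+\sqrt{a^2+4}}{2}$ is a root of $t^2-at-1$; its conjugate $\bar\theta$ satisfies $\theta+\bar\theta=a$ and $\theta\bar\theta=-1$, so the norm $N(x+y\theta)=(x+y\theta)(x+y\bar\theta)=x^2+axy-y^2$ is exactly the left side of our equation. A short induction gives the key identity $\theta^n = e(n)\theta+e(n-1)$, hence $N(\theta^n)=(\theta\bar\theta)^n=(-1)^n$; taking $n$ even recovers the stated solutions $(x,y)=(e(2n-1),e(2n))$, so only the converse needs work. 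Multiplication by $\theta^2$ on $R$ sends $(X,Y)$ to $F(X,Y):=(X+aY,\;aX+(a^2+1)Y)$, which is precisely the map $(e(2n-1),e(2n))\mapsto(e(2n+1),e(2n+2))$; its inverse is the integer-linear map $G(X,Y):=\big((a^2+1)X-aY,\;Y-aX\big)$, and since $N(\theta^{\pm2})=1$ both $F$ and $G$ carry solutions of $x^2+axy-y^2=1$ to solutions.

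Next I would run a descent on positive solutions. Let $(X,Y)$ be a solution in positive integers. From $X^2+aXY-Y^2=1$ we get $Y(Y-aX)=X^2-1\ge 0$, so $Y\ge aX$; if $X=1$ this forces $Y=0$ or $Y=a$, while if $X\ge 2$ it forces $Y-aX\ge 1$. Completing the square in $X^2+aXY-Y^2=1$ gives $(Y-\frac a2 X)^2<\frac{a^2+4}{4}X^2$, i.e. $Y<\theta X$; combined with the elementary bound $a\sqrt{a^2+4}<a^2+2$ (equivalently $a\theta<a^2+1$) this yields $(a^2+1)X-aY>(a^2+1)X-a\theta X>0$. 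Hence for $X\ge 2$, $G(X,Y)$ is again a solution in positive integers whose second coordinate $Y-aX$ is a positive integer strictly smaller than $Y$.

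I would then finish by infinite descent / well-ordering: iterating $G$ produces a strictly decreasing sequence of positive second coordinates, which must terminate, and it can only terminate at a solution with first coordinate $1$, namely $(1,0)$ or $(1,a)=(e(1),e(2))$; since a solution obtained by applying $G$ to one with $X\ge 2$ has positive second coordinate, the terminal solution is $(1,a)$. Applying $F$ back up the chain, the original solution equals $F^{k}(e(1),e(2))=(e(2k+1),e(2k+2))$, i.e. $(e(2n-1),e(2n))$ with $n=k+1$. General integer solutions reduce to the positive case via the symmetry $(x,y)\mapsto(-x,-y)$; one should also note the conjugate family $(e(2n+1),-e(2n))$, which likewise solves the equation, and this is why the statement is phrased for solutions in positive integers.

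The main obstacle is the positivity of $G$, specifically the inequality $(a^2+1)X>aY$, which is what ties the descent to the exact value $\theta=\frac{a+\sqrt{a^2+4}}{2}$ and forces the estimate $a\sqrt{a^2+4}<a^2+2$ to be applied uniformly in $a$; getting the boundary bookkeeping right — that the descent from a genuine positive solution cannot stall and cannot terminate at the degenerate $(1,0)$ — is the other delicate point. If one instead prefers a purely structural proof through the unit group of $\mathbb{Z}[\theta]$, the remaining gap is to show that $\theta$ (up to sign) is the fundamental unit for every $a\ge 1$; since that fact is essentially the same descent in disguise, the self-contained inequality argument above seems the cleaner route.
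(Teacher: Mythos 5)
Your proposal is correct and lives in the same algebraic framework as the paper: both identify $x^{2}+axy-y^{2}$ as the norm form on $\mathbb{Z}[\theta]$ with $\theta=\frac{a+\sqrt{a^{2}+4}}{2}$, and both realize the solution pairs as the coefficients of powers of the unit $1+a\theta=\theta^{2}$. The substantive difference is in how the hard direction is handled. The paper asserts that $(1,a)$ is the fundamental solution ``because $1$ is the minimum value of $x$'' and that consequently every solution is a power of it, deferring the justification to the general theory of Pell's equation; its induction (the Lemma inside the proof) establishes only the easy direction, that $\left(1+a\theta\right)^{n}=e(2n-1)+e(2n)\theta$. You prove the converse outright by infinite descent: the explicit inverse map $G(X,Y)=\left((a^{2}+1)X-aY,\;Y-aX\right)$ corresponding to division by $\theta^{2}$, the positivity estimates $Y\ge aX$ and $Y<\theta X$ (whence $(a^{2}+1)X-aY>0$ via $a\sqrt{a^{2}+4}<a^{2}+2$), and the strictly decreasing positive second coordinate force every positive solution down to $(1,a)=(e(1),e(2))$, after which applying $F=G^{-1}$ recovers $(e(2n-1),e(2n))$. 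This buys a self-contained proof of precisely the step the paper leaves to citation, and your observation that $(e(2n+1),-e(2n))$ also satisfies the equation correctly identifies why the statement must be read as a claim about solutions in positive integers --- a caveat the paper does not make explicit. Your bookkeeping that the descent cannot stall and must terminate at $(1,a)$ rather than the degenerate $(1,0)$ is handled correctly.
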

\begin{proof}
To prove this we need to solve for the integer solutions of the equation  $Q(x,y)=x^{2}+axy-y^{2}=\pm{1}$ and show that they are equivalent to (e(2n-1),e(2n)) for some i.
So we can write Q(x,y) as the norm of the element $x+y(\dfrac{a+\sqrt{a^{2}+4}}{2})$ in the extended number system 
$$\mathbb{Z}[\dfrac{a+\sqrt{a^{2}+4}}{2}]:=[x+y(\dfrac{a+\sqrt{a^{2}+4}}{2}):x,y \in \mathbb{Z}] $$
Since $x=1,y=a$ forms the initial fundamental solution because $1$ is the minimum value of $x$, the solutions to the equation are given by the integer coefficients 
of the expression
$$(1+a(\dfrac{a+\sqrt{a^{2}+4}}{2}))^{n} \text{ for all positive integers n}  $$
Now we will go on to prove a lemma which will in turn also prove the theorem.
\begin{lemma}
$$(1+a(\dfrac{a+\sqrt{a^{2}+4}}{2}))^{n}=e(2n-1)+e(2n)\dfrac{a+\sqrt{a^{2}+4}}{2} $$ 
\end{lemma}
\begin{proof}
We will prove this again by induction. Obviously it holds for the case $n=1$. Now say that it holds for all $n<=k$. Now for $n=k+1$
$$
\begin{array}{cl}
 &(1+a(\dfrac{a+\sqrt{a^{2}+4}}{2}))^{k+1}\\\\
 =&(e(2k-1)+e(2k)\dfrac{a+\sqrt{a^{2}+4}}{2})(1+a(\dfrac{a+\sqrt{a^{2}+4}}{2}))\\\\
 =&\dfrac{a+\sqrt{a^{2}+4}}{2}(ae(2k-1)+e(2k)(a^{2}+1))+(e(2k-1)+ae(2k))\\\\
 =&e(2k+1)+\dfrac{a+\sqrt{a^{2}+4}}{2}(a(e(2k-1)+ae(2k))+e(2k))\\\\
 =&e(2k+1)+\dfrac{a+\sqrt{a^{2}+4}}{2}(ae(2k+1)+e(2k))\\\\
 =&e(2k+1)+\dfrac{a+\sqrt{a^{2}+4}}{2}e(2k+2)
\end{array}
$$
\end{proof}
This shows that the lemma holds and the only solutions to this problem is the members of the series (e(2n-1),e(2n)). As a result of this theorem the following theorem
also holds.
\begin{theorem}
 $(x,y) \rightarrow (e(2n),e(2n+1))$ forms all the solutions for the equation $x^{2}+axy-y^{2}=-1$
\end{theorem}
\end{proof}
\begin{theorem}
 The set of all  members of the generalized sequence e(n)=ae(n-1)+e(n-2) is identical with the set of positive values of the polynomial
 $$y(2-(x^{2}+axy-y^{2})^{2})$$
\end{theorem}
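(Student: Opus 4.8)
The plan is to establish the two inclusions separately, leaning entirely on Theorems 1--3 (and assuming, as throughout this section, $a\ge 1$ with $e(0)=0$, $e(1)=1$). For the inclusion ``every member of the sequence occurs as a positive value of $P(x,y):=y\bigl(2-(x^{2}+axy-y^{2})^{2}\bigr)$'', fix $n\ge 1$ and substitute $(x,y)=(e(n-1),e(n))$. Theorem 1 gives $x^{2}+axy-y^{2}=(-1)^{n}=\pm 1$, hence $(x^{2}+axy-y^{2})^{2}=1$ and $P(e(n-1),e(n))=e(n)\cdot(2-1)=e(n)$; in particular $e(1)$ arises from $(x,y)=(0,1)$. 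Since $e(n)>0$ for every $n\ge 1$, this shows every such term is a positive value of $P$. I would add a remark that $e(0)=0$ is itself not a positive value, so reading ``the members of the sequence'' as its positive terms $e(1),e(2),\dots$ (the intended reading, matching Jones' formulation) loses nothing.

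For the converse I would take $x,y$ to be nonnegative integers with $P(x,y)>0$ and set $t=x^{2}+axy-y^{2}\in\mathbb{Z}$, then split on $t^{2}$. If $t^{2}\ge 2$ then $2-t^{2}\le -2<0$, so $P(x,y)>0$ would force $y<0$, impossible; this is the one place the restriction of the variables to nonnegative integers is used, and it is essential --- as in Jones' original statement, permitting negative $y$ together with a large value of $|x^{2}+axy-y^{2}|$ manufactures spurious positive outputs. If $t=0$, viewing $x^{2}+axy-y^{2}=0$ as a quadratic in $x$ shows that an integer solution with $y\ne 0$ would require $a^{2}+4$ to be a perfect square; but for every $a\ge 1$ one has $a^{2}<a^{2}+4<(a+2)^{2}$ with $a^{2}+4\ne(a+1)^{2}$, so $a^{2}+4$ is not a square, forcing $x=y=0$ and $P=0$, not positive. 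The only surviving case is $t^{2}=1$, where $2-t^{2}=1$, so $P(x,y)=y$; thus $y>0$ and $(x,y)$ solves $x^{2}+axy-y^{2}=\pm 1$. By Theorem 2 every solution of $x^{2}+axy-y^{2}=1$ is of the form $(e(2m-1),e(2m))$ and by Theorem 3 every solution of $x^{2}+axy-y^{2}=-1$ is of the form $(e(2m),e(2m+1))$ (the pair $(0,1)$ being the $m=0$ instance); in either case $y$ is a term of the sequence, so $P(x,y)=y\in\{e(n):n\ge 1\}$. Together with the first paragraph this proves the two sets coincide.

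The case analysis on $t^{2}\in\{0,1\}$ versus $t^{2}\ge 2$ is routine once the domain of $(x,y)$ is fixed, so the real work has already been done in Theorems 1--3; the two things to be careful about are (i) stating at the outset that $x,y$ range over the nonnegative integers and disposing of the ``$a^{2}+4$ is not a square'' point once and for all, and (ii) boundary bookkeeping --- applying Theorems 2 and 3 with exactly the index conventions used in their proofs, so that the small-index solution pairs such as $(e(0),e(1))=(0,1)$ and $(e(1),e(2))=(1,a)$ are accounted for and no stray value of $y$ escapes the classification. I expect (ii) to be the only spot where a reader might want more detail, and a single clarifying sentence about the range of $m$ in Theorems 2 and 3 should settle it.
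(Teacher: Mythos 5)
Your proposal is correct and follows essentially the same route as the paper: the forward direction substitutes $(e(n-1),e(n))$ and invokes Theorem 1, and the converse bounds $t=x^{2}+axy-y^{2}$ by $t^{2}<2$ and then appeals to Theorems 2 and 3. The only difference is that you are more careful than the paper, which silently asserts $0<t^{2}$ --- your explicit dispatch of the $t=0$ case via the non-squareness of $a^{2}+4$, and your attention to the domain of $(x,y)$ and to the boundary pair $(0,1)$, fill genuine small gaps in the paper's two-line argument.
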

\begin{proof}
 For x and y as members of the sequence the value of the polynomial boils down to y which is a positive member of the sequence. To see the converse, assume that 
 x and y are positive integers for which the value of the polynomial is positive. In that case we have,
 $$0<(x^{2}+axy-y^{2})^{2}<2$$ which means that
 $$(x^{2}+axy-y^{2})^{2}=1$$ guranteeing that x and y are members of the sequence. Hence the theorem is proved
\end{proof}
\subsection{An interesting identity}
In the previous subsection we observed that the solution of the equation $y^{2}-axy-x^{2}=1$ are members of the sequence $e(n)=ae(n-1)+e(n-2)$ where $(x,y) \rightarrow
(e(2k),e(2k+1))$. So if y has to be an integer for some x then the discriminant of the quadratic equation in y must be a square. We find,
$$\Delta=a^{2}x^{2}+4(x^{2}+1)=p^{2}$$ for some integer p.
Then we have
$$
\begin{array}{cl}
& p^{2}-x^{2}(a^{2}+4)=4\\
or&(\dfrac{p}{2})^{2}-(\dfrac{x}{2})^{2}(a^{2}+4)=1\\
\end{array}
$$
The equation turns out to be a simple Pell's equation. The solution in the extended number system 
$$\mathbb{Z}[\sqrt{a^{2}+4}]:=[x+y\sqrt{a^{2}+4}:2x,2y \in \mathbb{Z}] $$
is simply
$$\dfrac{p(i)}{2}+\dfrac{e(2i)\sqrt{a^{2}+4}}{2}$$ where p(i) is the corresponding square for the discriminant.
The solution to Pell's equation in this extended number system follows multiplicity and hence when we multiply 2 solutions of this system and collect the coefficient of
$\sqrt{a^{2}+4}$ we find it to be 
$$\dfrac{p(i)e(2j)}{4}+\dfrac{p(j)e(2i)}{4}$$
which must be half of another member of the sequence $e$ and specifically must be $\dfrac{e(2(i+j))}{2}$ since product on this integer system means addition on the index. Now replacing the
values of p(i) and p(j) we get the following theorem
\begin{theorem}
 $$e(2(i+j))=\dfrac{e(2j)}{2}\sqrt{e(2i)^{2}(a^{2}+4)+4}+\dfrac{e(2i)}{2}\sqrt{e(2j)^{2}(a^{2}+4)+4}$$
\end{theorem}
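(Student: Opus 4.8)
The plan is to make rigorous the heuristic multiplicativity argument that precedes the statement, carrying it out inside the ring $R:=\mathbb{Z}[\sqrt{a^{2}+4}]$ (with half-integer coefficients allowed, as in the paper, so that the relevant norm-$1$ units actually lie in $R$). First I would pin down the quantity under the radical. Applying Theorem 1 with index $2i$ gives $e(2i+1)^{2}-ae(2i+1)e(2i)-e(2i)^{2}=1$; multiplying by $4$ and completing the square yields
$$\bigl(2e(2i+1)-ae(2i)\bigr)^{2}=e(2i)^{2}(a^{2}+4)+4 .$$
Hence $p(i):=\sqrt{e(2i)^{2}(a^{2}+4)+4}=2e(2i+1)-ae(2i)$ is a positive integer, and the element $u_{i}:=\tfrac{p(i)}{2}+\tfrac{e(2i)}{2}\sqrt{a^{2}+4}\in R$ has norm $\tfrac{p(i)^{2}-e(2i)^{2}(a^{2}+4)}{4}=1$, i.e.\ it is a unit of norm $1$.

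The core of the proof is the claim $u_{i}=u_{1}^{\,i}$ for all $i\ge 1$, where $u_{1}=\tfrac{a^{2}+2}{2}+\tfrac{a}{2}\sqrt{a^{2}+4}$ (using $e(2)=a$ and $p(1)=a^{2}+2$). I would establish this by induction on $i$: assuming $u_{k}=u_{1}^{\,k}$, one computes the product $u_{k}u_{1}$ and checks it equals $u_{k+1}$ by verifying the two scalar identities
$$a\,p(k)+(a^{2}+2)\,e(2k)=2\,e(2k+2),\qquad (a^{2}+2)\,p(k)+a(a^{2}+4)\,e(2k)=2\,p(k+1),$$
both of which follow by substituting $p(k)=2e(2k+1)-ae(2k)$ and $p(k+1)=2e(2k+3)-ae(2k+2)$ and then unwinding the recurrence $e(n)=ae(n-1)+e(n-2)$. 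Granting the claim, $u_{i}u_{j}=u_{1}^{\,i}u_{1}^{\,j}=u_{1}^{\,i+j}=u_{i+j}$; expanding the left-hand side and reading off the coefficient of $\sqrt{a^{2}+4}$ gives $\tfrac{p(i)e(2j)+p(j)e(2i)}{4}=\tfrac{e(2(i+j))}{2}$, which is precisely the asserted formula once we substitute back $p(i)=\sqrt{e(2i)^{2}(a^{2}+4)+4}$ and $p(j)=\sqrt{e(2j)^{2}(a^{2}+4)+4}$.

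The main obstacle is the inductive step $u_{k}u_{1}=u_{k+1}$, specifically the rational-part identity: it is not transparent and genuinely relies on the explicit relation $p(k)=2e(2k+1)-ae(2k)$ together with two applications of the recurrence; everything else is bookkeeping about powers of a unit in $R$. As a cross-check I would also note a shortcut via Binet's formula $e(n)=(\alpha^{n}-\beta^{n})/(\alpha-\beta)$ with $\alpha=\tfrac{a+\sqrt{a^{2}+4}}{2}$, $\beta=\tfrac{a-\sqrt{a^{2}+4}}{2}$, so that $\alpha\beta=-1$ and $(\alpha-\beta)^{2}=a^{2}+4$: then $e(2i)^{2}(a^{2}+4)+4=(\alpha^{2i}-\beta^{2i})^{2}+4=(\alpha^{2i}+\beta^{2i})^{2}$, giving $p(i)=\alpha^{2i}+\beta^{2i}$, and the claimed identity reduces to the elementary expansion
$$(\alpha^{2j}-\beta^{2j})(\alpha^{2i}+\beta^{2i})+(\alpha^{2i}-\beta^{2i})(\alpha^{2j}+\beta^{2j})=2\bigl(\alpha^{2(i+j)}-\beta^{2(i+j)}\bigr),$$
divided by $2(\alpha-\beta)$. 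I would keep the extended-number-system argument as the main proof to stay within the paper's framework and use this Binet computation only as a sanity check.
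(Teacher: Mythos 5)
Your proposal is correct and follows essentially the same route as the paper: interpreting $\tfrac{p(i)}{2}+\tfrac{e(2i)}{2}\sqrt{a^{2}+4}$ as a norm-$1$ element of $\mathbb{Z}[\sqrt{a^{2}+4}]$ and reading the identity off the coefficient of $\sqrt{a^{2}+4}$ in a product of two such solutions. The difference is that you actually prove the two facts the paper only asserts --- the explicit identification $p(i)=2e(2i+1)-ae(2i)$ via Theorem~1, and the index-additivity $u_{i}u_{j}=u_{i+j}$ via the induction $u_{i}=u_{1}^{\,i}$ --- and your Binet-formula cross-check is a sound independent verification, so your write-up is strictly more complete than the paper's.
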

For the fibonacci sequence this simply turns out to be
$$f(2(i+j))=\dfrac{f(2j)}{2}\sqrt{5f(2i)^{2}+4}+\dfrac{f(2i)}{2}\sqrt{5f(2j)^{2}+4}$$
Hence this also gives a very interesting way to check whether a given number $x$ is an even-indexed member of the sequence $e(n)=ae(n-1)+be(n-2)$. It is so if 
$x^{2}(a^{2}+4)+4$ is a perfect square.\\
Similarly we can draw up conclusions for the odd indexed members of the sequence. The quadratic equation can be rewritten as $x^{2}+axy-y^{2}+1=0$ where the discriminant
must be a square for an integer solution.
$$\Delta=a^{2}y^{2}+4(y^{2}-1)=p^{2}$$ for some integer p.
Then we have
$$
\begin{array}{cl}
& p^{2}-y^{2}(a^{2}+4)=-4\\
or&(\dfrac{y}{2})^{2}(a^{2}+4)-(\dfrac{p}{2})^{2}=1\\
\end{array}
$$
The solution in the extended number system 
$$\mathbb{Z}[\sqrt{a^{2}+4}]:=[x\sqrt{a^{2}+4}+y:2x,2y \in \mathbb{Z}] $$
is simply
$$\dfrac{p(i)}{2}+\dfrac{e(2i+1)\sqrt{a^{2}+4}}{2}$$ where p(i) is the corresponding square for the discriminant. We observe that this is just the same as the case
of the even-indexed members. Hence again the product of the 2 solutions e(2i+1) and e(2j+1) will lead to an addition over the index but this results in an even-indexed 
member of the sequence. This is because in both cases the number system and the solution form turns out to be the same.
\begin{theorem}
 $$e(2(i+j)+2)=\dfrac{e(2j+1)}{2}\sqrt{e(2i+1)^{2}(a^{2}+4)-4}+\dfrac{e(2i+1)}{2}\sqrt{e(2j+1)^{2}(a^{2}+4)-4}$$
\end{theorem}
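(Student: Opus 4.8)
The plan is to mirror the argument already used for the even-indexed identity, working entirely inside the extended ring generated by $u := \dfrac{a+\sqrt{a^{2}+4}}{2}$. First I would record the basic facts about $u$: writing $u' = \dfrac{a-\sqrt{a^{2}+4}}{2}$ for its conjugate, one has $uu' = \dfrac{a^{2}-(a^{2}+4)}{4} = -1$, so $u$ is a unit of norm $-1$; and a one-line induction (base case $u^{1}=e(0)+e(1)u=u$, step using $u^{2}=e(1)+e(2)u$ and $e(m+1)=ae(m)+e(m-1)$) shows $u^{m}=e(m-1)+e(m)\,u$ for every positive integer $m$. This is exactly the extension of the Lemma already proved for the even powers, where $(1+au)^{n}=u^{2n}=e(2n-1)+e(2n)u$. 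Rewriting in the $\{1,\sqrt{a^{2}+4}\}$-basis gives $u^{m}=A_{m}+B_{m}\sqrt{a^{2}+4}$ with $B_{m}=\dfrac{e(m)}{2}$ and $A_{m}=e(m-1)+\dfrac{a\,e(m)}{2}$; in particular the coefficient of $\sqrt{a^{2}+4}$ in $u^{m}$ is always half of a sequence member, which matches the form $\dfrac{p(i)}{2}+\dfrac{e(2i+1)\sqrt{a^{2}+4}}{2}$ described just before the statement.

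Next I would exploit multiplicativity. From $u^{m+n}=u^{m}u^{n}$, equating the coefficients of $\sqrt{a^{2}+4}$ yields $B_{m+n}=A_{m}B_{n}+A_{n}B_{m}$, that is,
$$e(m+n)=A_{m}\,e(n)+A_{n}\,e(m).$$
To turn $A_{m}$ into the radical appearing in the statement I would take norms: $\mathrm{N}(u^{m})=(uu')^{m}=(-1)^{m}$ together with $\mathrm{N}(A_{m}+B_{m}\sqrt{a^{2}+4})=A_{m}^{2}-B_{m}^{2}(a^{2}+4)$ gives $4A_{m}^{2}=e(m)^{2}(a^{2}+4)+4(-1)^{m}$. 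Since $A_{m}=e(m-1)+\dfrac{a\,e(m)}{2}>0$ for $m\ge 1$ and $a\ge 1$, the positive square root is the correct branch, so for odd $m=2i+1$ this reads $2A_{2i+1}=\sqrt{e(2i+1)^{2}(a^{2}+4)-4}$, which is precisely the quantity $p(i)$ (recall $p(i)^{2}=a^{2}e(2i+1)^{2}+4(e(2i+1)^{2}-1)$).

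Finally I would specialize $m=2i+1$, $n=2j+1$ in the addition formula, so that $m+n=2(i+j)+2$, and substitute $A_{2i+1}=\dfrac{p(i)}{2}$, $A_{2j+1}=\dfrac{p(j)}{2}$ with the explicit values of $p(i),p(j)$, obtaining
$$e(2(i+j)+2)=\frac{e(2j+1)}{2}\sqrt{e(2i+1)^{2}(a^{2}+4)-4}+\frac{e(2i+1)}{2}\sqrt{e(2j+1)^{2}(a^{2}+4)-4}.$$
The only genuinely delicate points are the sign/branch bookkeeping — verifying that $A_{m}$ is the positive root and that the parity of $m+n=2(i+j)+2$ is even, so that the product of two norm-$(-1)$ solutions lands back among the even-indexed, norm-$(+1)$ solutions, consistent with the preceding discussion — and the (harmless) observation that the half-integer coefficients of $u^{m}$ are legitimate members of the extended number system as defined. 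Everything else is routine expansion, exactly paralleling the even-indexed case; a direct induction on $j$ is also possible but is messier because of the square roots, so the extended-ring route is preferable.
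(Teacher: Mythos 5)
Your proof is correct and follows essentially the same route as the paper: multiplicativity in the extended number system $\mathbb{Z}[\sqrt{a^{2}+4}]$, with the product of two odd-indexed (norm $-1$) solutions adding indices and landing among the even-indexed (norm $+1$) ones, after which one reads off the coefficient of $\sqrt{a^{2}+4}$. Your packaging via $u^{m}=e(m-1)+e(m)u$ and the norm identity $4A_{m}^{2}=e(m)^{2}(a^{2}+4)+4(-1)^{m}$ in fact supplies the details the paper only asserts, namely the explicit identification $p(i)=2A_{2i+1}=2e(2i)+ae(2i+1)$ and the verification that index addition under multiplication extends from the even-power lemma to all powers.
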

For the fibonacci sequence this simply turns out to be
$$f(2(i+j)+2)=\dfrac{f(2j+1)}{2}\sqrt{5f(2i+1)^{2}-4}+\dfrac{f(2i+1)}{2}\sqrt{5f(2j+1)^{2}-4}$$
and also
\begin{theorem}
 $$e(2(i+j)+1)=\dfrac{e(2j)}{2}\sqrt{e(2i+1)^{2}(a^{2}+4)-4}+\dfrac{e(2i+1)}{2}\sqrt{e(2j)^{2}(a^{2}+4)+4}$$
\end{theorem}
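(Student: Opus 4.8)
The plan is to reuse, essentially verbatim, the extended–number–system machinery already set up for the two preceding theorems, but now to multiply together a solution coming from the \emph{odd}-index Pell analysis and a solution coming from the \emph{even}-index analysis. Write $p(i)$ and $q(j)$ for the companion square roots defined by $p(i)^2 = e(2i+1)^2(a^2+4)-4$ and $q(j)^2 = e(2j)^2(a^2+4)+4$; as shown in the discussion preceding Theorems 5 and 6, the element $\alpha := \frac{p(i)}{2}+\frac{e(2i+1)}{2}\sqrt{a^2+4}$ is the norm-$(-1)$ solution attached to the index $2i+1$, while $\beta := \frac{q(j)}{2}+\frac{e(2j)}{2}\sqrt{a^2+4}$ is the norm-$(+1)$ solution attached to the index $2j$, both lying in $\mathbb{Z}[\sqrt{a^2+4}]$ (the half-integer-coefficient version).

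The key step is multiplicativity together with the principle, already invoked twice above, that multiplication in this system corresponds to addition of indices. The product $\alpha\beta$ has norm $(-1)(+1)=-1$, so it is again an odd-index solution, and it must be the one attached to index $2i+1+2j = 2(i+j)+1$, i.e. $\frac{r}{2}+\frac{e(2(i+j)+1)}{2}\sqrt{a^2+4}$ with $r^2 = e(2(i+j)+1)^2(a^2+4)-4$. Reading off the coefficient of $\sqrt{a^2+4}$ in $\alpha\beta$ gives
$$\frac{e(2(i+j)+1)}{2} = \frac{p(i)e(2j)}{4}+\frac{q(j)e(2i+1)}{4},$$
and substituting $p(i)=\sqrt{e(2i+1)^2(a^2+4)-4}$ and $q(j)=\sqrt{e(2j)^2(a^2+4)+4}$ yields the stated identity. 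Equivalently, one can run the whole argument through the single unit $\varepsilon = \frac{a+\sqrt{a^2+4}}{2}$ of norm $-1$, using the Binet-type identity $\varepsilon^n = \frac{L(n)+e(n)\sqrt{a^2+4}}{2}$ where $L$ is the companion sequence $L(0)=2,\ L(1)=a,\ L(n)=aL(n-1)+L(n-2)$; then $L(2i+1)=p(i)$, $L(2j)=q(j)$, and the claim is just the addition formula $2e(m+n)=e(m)L(n)+e(n)L(m)$ specialized to $m=2i+1$, $n=2j$, which in turn follows from $\varepsilon^{m}\varepsilon^{n}=\varepsilon^{m+n}$.

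The main obstacle is the bookkeeping over \emph{which} power of the fundamental unit corresponds to \emph{which} index, and in particular the parity matching: one must be sure that a norm-$(+1)$ factor always carries an even index and a norm-$(-1)$ factor an odd index, so that the product $\alpha\beta$ lands on the odd index $2(i+j)+1$ rather than on some even index. This needs only the observations that $\varepsilon$ (the element $1+a\frac{a+\sqrt{a^2+4}}{2}$ of Theorem 2, up to the passage to $\mathbb{Z}[\sqrt{a^2+4}]$) has norm $-1$, that $\varepsilon^2$ generates the norm-$1$ solutions at even indices, and that $\{\varepsilon^n\}$ exhausts all solutions — all of which is already contained in Theorem 2 and the remarks after Theorems 5 and 6. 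Once the indices are aligned, the remaining work — expanding $\alpha\beta$ and isolating the $\sqrt{a^2+4}$-coefficient — is routine.
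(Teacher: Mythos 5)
Your proposal is correct and follows essentially the same route the paper intends: multiply the norm-$(+1)$ solution attached to $e(2j)$ by the norm-$(-1)$ solution attached to $e(2i+1)$ in $\mathbb{Z}[\sqrt{a^{2}+4}]$, use index-additivity of the product, and read off the coefficient of $\sqrt{a^{2}+4}$. Your explicit parity/norm bookkeeping and the reformulation via the companion sequence $L(n)$ are more careful than the paper's own (essentially omitted) justification, but they do not constitute a different method.
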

For the fibonacci sequence this simply turns out to be
$$f(2(i+j)+1)=\dfrac{f(2j)}{2}\sqrt{5f(2i+1)^{2}-4}+\dfrac{f(2i+1)}{2}\sqrt{5f(2j)^{2}+4}$$
Again this gives us a way of checking whether a number x is an odd-indexed member of the sequence $e(n)=ae(n-1)+e(n-2)$. We simply have to check whether
$x^{2}(a^{2}+4)-4$ is a perfect square or not.
\subsection{Sequences with starting numbers p and q}
In the previous section we have analysed sequences of the form $e(n)=ae(n-1)+e(n-2)$ where the starting values are 0 and 1 respectively. We will now analyse the 
case where the starting values are p and q (let us denote the corresponding sequence by $g(.)$)
We can always backtrace the previous values of the sequence by using $q-pa$ so that we obtain $\hat{p}$ and $\hat{q}$ such
that $\hat{q}-\hat{p}a<0$. Let us call these the fundamental starting values. So we can always obtain these fundamental starting values from the given starting values
for such a sequence. So without loss of generality we can assume the fundamental values of the sequence to be $p$ and $q$ i.e. $g(0)=p,g(1)=q,g(2)=p+qa$ such that 
$q-pa<0$.\\
The recurrence relation in this case will be
\begin{lemma}
$$\text{For any positive integer i, } g(i)^{2}+ag(i+1)g(i)-{g(i+1)}^{2}=(-1)^{i+1}R \text{ for some constant R}$$
\end{lemma}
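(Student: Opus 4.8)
The plan is to imitate the proof of Theorem 1 almost verbatim, the only difference being that the constant $\pm 1$ on the right-hand side is replaced by $\pm R$ for a fixed $R$ determined by the base case. Concretely, write $Q(x,y)=x^{2}+axy-y^{2}$ and define $R$ to be the value of the left-hand side at $i=1$, namely $R=Q(g(1),g(2))=q^{2}+a(p+aq)q-(p+aq)^{2}$, which simplifies to $R=q^{2}-apq-p^{2}=-Q(p,q)$. The claim then reads $Q(g(i),g(i+1))=(-1)^{i+1}R$ for every positive integer $i$.

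First I would dispose of the base case $i=1$, which holds by the very definition of $R$. For the inductive step, assume $Q(g(k),g(k+1))=(-1)^{k+1}R$ and compute $Q(g(k+1),g(k+2))$ after substituting the recurrence $g(k+2)=ag(k+1)+g(k)$. Expanding
\[
g(k+1)^{2}+a\bigl(ag(k+1)+g(k)\bigr)g(k+1)-\bigl(ag(k+1)+g(k)\bigr)^{2},
\]
the $a^{2}g(k+1)^{2}$ terms cancel and the cross terms combine, leaving exactly $-\bigl(g(k)^{2}+ag(k)g(k+1)-g(k+1)^{2}\bigr)=-Q(g(k),g(k+1))$; by the inductive hypothesis this equals $-(-1)^{k+1}R=(-1)^{k+2}R$, which closes the induction. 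This is literally the same cancellation used in the proof of Theorem 1, and the presence of arbitrary starting values has no effect on the algebra.

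In fact the whole induction can be collapsed to one observation: the shift $(x,y)\mapsto(y,\,ay+x)$, which sends $(g(i),g(i+1))$ to $(g(i+1),g(i+2))$, satisfies $Q(y,\,ay+x)=-Q(x,y)$ identically in $x,y$; hence $Q(g(i),g(i+1))=(-1)^{i}Q(g(0),g(1))=(-1)^{i}\bigl(p^{2}+apq-q^{2}\bigr)=(-1)^{i+1}R$. I do not foresee any genuine obstacle: the statement is just the anti-invariance of the quadratic form under this shift. The only points that merit a comment are (i) the \emph{fundamental starting values} hypothesis $q-pa<0$ is irrelevant to the truth of the identity — it only picks out a canonical representative of the sequence, hence a canonical $R$; and (ii) for integer $p,q$ not both zero one automatically gets $R\neq 0$, since $R=0$ would force $q/p=(a\pm\sqrt{a^{2}+4})/2\notin\mathbb{Q}$, and this non-vanishing is exactly what lets $R$ play the role of $\pm1$ in the Pell-type arguments to come.
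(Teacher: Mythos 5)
Your proof is correct and follows essentially the same route as the paper, which simply notes that the induction of Theorem~1 carries over verbatim with $R$ fixed by the base case (the paper reads off $R=q^{2}-apq-p^{2}$ at $i=0$, you at $i=1$; the values agree). Your closing remark that the identity is just the anti-invariance $Q(y,ay+x)=-Q(x,y)$ is a nice compression of the same computation, and your observations on the irrelevance of the fundamental-starting-value condition and on $R\neq 0$ are correct bonuses not present in the paper.
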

The proof remains the same while R can be found by plugging in $i=0$ and we have
$$R=q^{2}-apq-p^{2}$$
Now let us consider a similar polynomial
$$Q(x,y)=x^{2}+axy-y^{2}=R$$
and we know that the solution is $(x,y) \rightarrow (e(2n-1),e(2n))$. Here again Q(x,y) is the norm of the element  $x+y(\dfrac{a+\sqrt{a^{2}+4}}{2})$ in the extended number system 
$$\mathbb{Z}[\dfrac{a+\sqrt{a^{2}+4}}{2}]:=[x+y(\dfrac{a+\sqrt{a^{2}+4}}{2}):x,y \in \mathbb{Z}] $$ but this time the norm of the solution is $\sqrt{R}$. So we get the
solutions to this equation by repeatedly multiplying the fundamental solution in $\mathbb{Z}[\dfrac{a+\sqrt{a^{2}+4}}{2}]$ to the equation $$Q(x,y)=x^{2}+axy-y^{2}=1$$ but we already
know it to be $$1+a(\dfrac{a+\sqrt{a^{2}+4}}{2})$$. Also the fundamental solution to the equation $Q(x,y)=x^{2}+axy-y^{2}=R$ must be
$$q+(qa+p)(\dfrac{a+\sqrt{a^{2}+4}}{2})$$ since $(q,p+qa) \rightarrow (g(1),g(2))$. So this is exactly similar to rotation in a circle of Radius>1 in complex numbers.
Hence we must have 
\begin{theorem} 
$$(q+(qa+p)(\dfrac{a+\sqrt{a^{2}+4}}{2}))(1+a(\dfrac{a+\sqrt{a^{2}+4}}{2}))^{n-1}=g(2n-1)+g(2n)(\dfrac{a+\sqrt{a^{2}+4}}{2})$$
\end{theorem}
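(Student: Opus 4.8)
The plan is to establish the identity by induction on $n\ge 1$, in the same style as the earlier Lemma that gives $\bigl(1+a\bigl(\tfrac{a+\sqrt{a^{2}+4}}{2}\bigr)\bigr)^{n}=e(2n-1)+e(2n)\bigl(\tfrac{a+\sqrt{a^{2}+4}}{2}\bigr)$. Write $\omega:=\tfrac{a+\sqrt{a^{2}+4}}{2}$ for brevity; the single algebraic fact I will use throughout is that $\omega$ is a root of $x^{2}=ax+1$, so every occurrence of $\omega^{2}$ may be replaced by $a\omega+1$. It is exactly this relation that makes multiplication by $1+a\omega$ behave like shifting the index of $g$ by two.

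For the base case $n=1$ the left-hand side is literally $q+(qa+p)\,\omega$, while the right-hand side is $g(1)+g(2)\,\omega=q+(p+qa)\,\omega$ by the choice of fundamental starting values $g(0)=p$, $g(1)=q$, $g(2)=p+qa$; the two coincide.

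For the inductive step, assume the identity for some $n=k$. Multiplying both sides by $1+a\omega$ converts the left-hand side into the $n=k+1$ instance, so it is enough to check that $\bigl(g(2k-1)+g(2k)\,\omega\bigr)(1+a\omega)=g(2k+1)+g(2k+2)\,\omega$. Expanding and using $\omega^{2}=a\omega+1$, the constant part collects to $g(2k-1)+a\,g(2k)$ and the coefficient of $\omega$ to $a\,g(2k-1)+(1+a^{2})\,g(2k)$. The first equals $g(2k+1)$ straight from the recurrence $g(m)=a\,g(m-1)+g(m-2)$, and the second rearranges as $a\bigl(g(2k-1)+a\,g(2k)\bigr)+g(2k)=a\,g(2k+1)+g(2k)=g(2k+2)$, again by the recurrence. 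Hence the identity propagates from $k$ to $k+1$, and the induction is complete.

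I do not anticipate a genuine obstacle here: the argument is structurally the same as the earlier Lemma, and the only care needed is the bookkeeping in the $\omega^{2}$-reduction and applying the recurrence at the correct (odd versus even) indices. As an alternative route — and a convenient check — one can first record the closed form $g(m)=p\,e(m-1)+q\,e(m)$ (immediate by induction once one sets $e(-1)=1$, $e(0)=0$), note the companion one-step-shift identity $e(2n-2)+e(2n-1)\,\omega=\omega\,(1+a\omega)^{n-1}$, and then obtain the theorem by linearity: $g(2n-1)+g(2n)\,\omega=p\,\omega(1+a\omega)^{n-1}+q(1+a\omega)^{n}=(1+a\omega)^{n-1}\bigl(q+(p+qa)\,\omega\bigr)$, which is exactly the claimed left-hand side. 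Since the direct induction is shorter and self-contained, I would present that as the main proof and leave the algebraic derivation as a remark.
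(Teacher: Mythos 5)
Your induction is correct: the base case matches the fundamental starting values $g(1)=q$, $g(2)=p+qa$, and the step $(g(2k-1)+g(2k)\omega)(1+a\omega)=g(2k+1)+g(2k+2)\omega$ with $\omega^{2}=a\omega+1$ checks out exactly as you compute it. This is, however, a genuinely different route from the paper's. The paper gives no formal induction for this theorem at all: it derives the statement from the structure of the norm form $Q(x,y)=x^{2}+axy-y^{2}$ in the extended number system $\mathbb{Z}[\frac{a+\sqrt{a^{2}+4}}{2}]$, asserting that $q+(qa+p)\omega$ is the fundamental solution of $Q(x,y)=R$ and that every solution arises by multiplying it by powers of the fundamental unit $1+a\omega$ (``exactly similar to rotation in a circle of radius $>1$''). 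Your argument is more elementary and fully self-contained, and it proves precisely what the theorem displays; the paper's norm-form reasoning is aiming at the stronger, implicit claim that the pairs $(g(2n-1),g(2n))$ exhaust the solutions of $Q(x,y)=R$, which is what the subsequent polynomial characterization (Theorem 11) actually needs, but that stronger claim is not part of the stated theorem and the paper does not rigorously establish it either. Your alternative decomposition $g(m)=p\,e(m-1)+q\,e(m)$ together with $e(2n-2)+e(2n-1)\omega=\omega(1+a\omega)^{n-1}$ is also valid and nicely exhibits the theorem as a linear combination of the two instances of the earlier Lemma.
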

This allows us to draw up relations between $e(.)$ and $g(.)$. We have, using Theorem 10 and Lemma 3
$$
\begin{array}{cl}
&(q+(qa+p)(\dfrac{a+\sqrt{a^{2}+4}}{2}))(e(2n-3)+e(2n-2)(\dfrac{a+\sqrt{a^{2}+4}}{2}))\\
=&qe(2n-3)+(\dfrac{a+\sqrt{a^{2}+4}}{2})(qe(2n-2)+(qa+p)e(2n-3))+\dfrac{(qa+p)e(2n-2)(a^{2}+2+a\sqrt{a^{2}+4})}{2}\\
=&(qe(2n-3)+(qa+p)e(2n-2))+(qe(2n-2)+(qa+p)e(2n-3)+a(qa+p)e(2n-2))(\dfrac{a+\sqrt{a^{2}+4}}{2})
\end{array}
$$
Comparing, we have
$$
\begin{array}{rcl}
g(2n-1)&=&qe(2n-3)+(qa+p)e(2n-2)\\
g(2n)&=&(q(a^{2}+1)+p)e(2n-2)+(qa+p)e(2n-3)
\end{array}
$$
\begin{theorem}
 The set of all  members of the generalized sequence e(n)=ae(n-1)+e(n-2) with fundamental starting values $p$ and $q$ is 
 identical with the set of positive integer values of the polynomial
 $$y(2-(\dfrac{x^{2}+axy-y^{2}}{R})^{2})$$ where $R=q^{2}-apq-p^{2}$
\end{theorem}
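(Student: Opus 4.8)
The plan is to mirror the proof of the Jones-type representation already obtained for the sequence with starting values $0$ and $1$ (Theorem~5), replacing Theorem~1 by the recurrence lemma for $g$, namely $g(i)^{2}+ag(i+1)g(i)-g(i+1)^{2}=(-1)^{i+1}R$, and the equation $x^{2}+axy-y^{2}=\pm1$ by $x^{2}+axy-y^{2}=\pm R$. Two inclusions have to be established. The inclusion ``every member of $g$ occurs as a value of the polynomial'' is immediate: for consecutive terms $x=g(i)$, $y=g(i+1)$ the $g$-recurrence gives $x^{2}+axy-y^{2}=(-1)^{i+1}R$, so $\bigl((x^{2}+axy-y^{2})/R\bigr)^{2}=1$ and the polynomial collapses to $y(2-1)=y=g(i+1)$; letting $i$ range over all indices (using the backtracked pair to pick up $g(0)$, and noting that $x$ is only ever required to be an integer while the output is what must be positive) sweeps out every member of the sequence.

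For the reverse inclusion, suppose $x,y$ are positive integers for which $N:=y\bigl(2-((x^{2}+axy-y^{2})/R)^{2}\bigr)$ is a positive integer, and write $Q:=x^{2}+axy-y^{2}$. Positivity of $N$ gives $Q^{2}<2R^{2}$. The goal is to upgrade this to $Q^{2}=R^{2}$: once that holds, $N=y$, and then the uniqueness results proved earlier --- Theorems~2 and~4 for $R=\pm1$, together with the analogue underlying Theorem~10 to the effect that the integer solutions of $x^{2}+axy-y^{2}=\pm R$ are precisely the consecutive pairs $\bigl(g(2n-1),g(2n)\bigr)$ and $\bigl(g(2n),g(2n+1)\bigr)$ obtained by multiplying the fundamental solution $q+(qa+p)\tfrac{a+\sqrt{a^{2}+4}}{2}$ by powers of $1+a\tfrac{a+\sqrt{a^{2}+4}}{2}$ --- force $(x,y)$ to be such a pair, so that $y$ is a member of $g$. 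One should also record, as for $R=\pm1$, that $Q=0$ cannot occur: $x^{2}+axy-y^{2}=0$ with $y\neq 0$ would make $(a+\sqrt{a^{2}+4})/2$ rational, whereas $a^{2}+4$ lies strictly between $a^{2}$ and $(a+2)^{2}$ and is distinct from $(a+1)^{2}$, hence is never a perfect square for $a\ge 1$.

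The step I expect to be the real obstacle is exactly the jump from $Q^{2}<2R^{2}$ to $Q^{2}=R^{2}$. When $R=\pm1$ this is trivial, since $Q\in\mathbb{Z}$ and the only positive square below $2$ is $1$; but for $|R|>1$ there are in general integers $m$ with $0<m^{2}<2R^{2}$ and $m\neq\pm R$, so the one-line argument behind Theorem~5 does not carry over. I would try two things. First, an integrality argument: $N\in\mathbb{Z}$ forces $R^{2}\mid yQ^{2}$, hence (with $d=\gcd(Q,R)$) $(R/d)^{2}\mid y$; substituting $y=(R/d)^{2}z$ back into $N$ reduces matters to $(Q/d)^{2}<2(R/d)^{2}$ with $Q/d$ coprime to $R/d$, and one would hope that the representation theory of the form $x^{2}+axy-y^{2}$ (which represents $R$) then pins $|Q|$ to $|R|$. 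Second, if that does not close the gap, I would strengthen the hypotheses on $p$ and $q$ --- for instance demanding that $|R|$ be minimal among the values $\pm(u^{2}-auv-v^{2})$ realized along the backtracking orbit, or that the form represent $\pm1$ only through multiples of $R$ --- so that stray representations are excluded; the example $a=2$, $(p,q)=(1,1)$, $R=-2$, where $(x,y)=(5,12)$ yields $Q=1$ and polynomial value $21\notin\{1,3,7,17,41,\dots\}$, shows that some safeguard of this kind is needed for the statement in full generality. Everything past this point is the routine bookkeeping already done for Theorem~5 and Theorem~10.
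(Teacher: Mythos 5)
Your plan follows the same route as the paper: the forward inclusion is the same one-line computation, and the reverse inclusion is set up identically, reducing everything to showing that a positive integer value of the polynomial forces $(x^{2}+axy-y^{2})^{2}=R^{2}$. The point where you stall --- passing from $Q^{2}<2R^{2}$ to $Q^{2}=R^{2}$ when $|R|>1$ --- is exactly the step the paper does not prove either: its proof of this theorem simply asserts that the positive integrality of the value forces $(x^{2}+axy-y^{2})^{2}=R^{2}$, with no argument, silently reusing the $R=\pm1$ reasoning of Theorem~5, where the implication is trivial because $1$ is the only positive square below $2$. So the obstacle you flag is real, and it is a gap in the paper, not merely in your write-up.

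In fact your own example settles the matter. With $a=2$ and $(p,q)=(1,1)$ (which satisfies the fundamental condition $q-pa=-1<0$) one has $R=-2$, and the pair $(x,y)=(5,12)$ gives $x^{2}+2xy-y^{2}=1$, so the polynomial evaluates to $12\bigl(2-\tfrac{1}{4}\bigr)=21$, a positive integer that is not a member of the sequence $1,1,3,7,17,41,\dots$. Hence the theorem as stated is false, and no completion of this proof is possible without modifying the statement. Of your two proposed repairs, note that the first (the divisibility analysis $R^{2}\mid yQ^{2}$) cannot succeed on its own, since the counterexample already satisfies every hypothesis of the theorem; some strengthening of the statement itself --- restricting to pairs with $R\mid x^{2}+axy-y^{2}$, or imposing extra conditions on $p$ and $q$ as you suggest --- is unavoidable. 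You should promote the counterexample from an aside to the main conclusion of your analysis.
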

\begin{proof}
 For x and y as members of the sequence the value of the polynomial boils down to y which is a positive member of the sequence. To see the converse, assume that 
 x and y are positive integers for which the value of the polynomial is a positive integer. In that case we must have,
 $$(x^{2}+axy-y^{2})^{2}=R^{2}$$ guranteeing that x and y are members of the sequence. Hence the theorem is proved
 \end{proof}
\section{Analysis of sequences with $a,b>1$}
Let us consider the recurrence $e(n)=ae(n-1)+be(n-2)$ where $a,b \neq 0$ and are positive integers. A second order recurrence, if exists for this sequence must be of
the form 
$$f(a,b)(e(i+1))^{2}+g(a,b)e(i+1)e(i)+h(a,b)e(i)^{2}=(-1)^{i}$$
where f,g,h are functions of $a$ and $b$. So this recurrence has to hold for $i=1$ . On using that $e(0)=0$, $e(1)=1$ and $e(2)=a$, we have\\
$$a^{2}f+ag+h=-1$$
So assuming that the recurrence holds for $i<=k$, we put $i=k+1$ and we have,
$$
\begin{array}{cl}
 &f(e(k+2))^{2}+ge(k+2)e(k+1)+h(e(k+1))^{2}\\
 =&f(ae(k+1)+be(k))^{2}+ge(k+1)(ae(k+1)+be(k))+h(e(k+1))^{2}\\
 =&e(k+1)^{2}(af^{2}+ga+h)+e(k)e(k+1)(2fab+gb)+e(k)^{2}fb^{2}\\
\end{array}
$$
This must be equal to
$$-(f(e(k+1))^{2}+ge(k+1)e(k)+h(e(k))^{2})$$
Equating the terms and solving the 4 equations, we have
$$
\begin{array}{lcl}
f&=&1\\
g&=&-\dfrac{2ab}{b+1}\\
h&=&-b^{2}
\end{array}
$$
However on substituting the values of $f,g,h$ in the equation $a^{2}f+ag+h=-1$ gives us a condition on $a$ and $b$.
$$a^{2}+1-b^{2}=\dfrac{2a^{2}b}{b+1}$$
Since b>1 according to our assumption,
$$a^{2}+1-b^{2}<a^{2}$$
Then,
$$\begin{array}{lcl}
\dfrac{2a^{2}b}{b+1}&<&a^{2}\\\\
\text{or }\dfrac{b+1}{b}&>&2\\\\
\text{or }b<1
\end{array}
$$
which is a contradiction to our assumption. Hence,
\begin{lemma}
There does not exist any such second order recurrence for $e(n)=ae(n-1)+be(n-2)$ and hence a second order
polynomial cannot exist for which the analysis of the previous sections can hold.
\end{lemma}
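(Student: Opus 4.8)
The plan is to argue by contradiction. Suppose that for the given $a,b>1$ there exist constants $f,g,h$ (functions of $a,b$) such that $P(i):=f\,e(i+1)^2+g\,e(i+1)e(i)+h\,e(i)^2=(-1)^i$ for every positive integer $i$. The first move is to exploit the alternating sign: from $P(i)=(-1)^i$ we get $P(i)+P(i+1)=0$ for all $i\ge 1$. Expanding $P(i+1)$ by substituting $e(i+2)=a\,e(i+1)+b\,e(i)$ rewrites $P(i)+P(i+1)$ as a single quadratic form in the three quantities $e(i+1)^2,\ e(i+1)e(i),\ e(i)^2$, with coefficients $f+fa^2+ga+h$, $g+2abf+bg$, and $h+fb^2$ respectively. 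This is precisely the set-up behind the displayed computation in the excerpt.

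The second step — and the one I expect to be the genuine obstacle — is to justify that each of these three coefficients must vanish, i.e.\ that the ``equating the terms'' move is legitimate. This is valid only because the triples $\bigl(e(i+1)^2,\,e(i+1)e(i),\,e(i)^2\bigr)$ do not all lie in a common plane as $i$ ranges over the positive integers. I would prove this by dividing through by $e(i)^2$ and noting that the ratio $\rho_i:=e(i+1)/e(i)$ takes at least three distinct values: explicitly $\rho_1=a$, $\rho_2=\tfrac{a^2+b}{a}$, $\rho_3=\tfrac{a^3+2ab}{a^2+b}$, and a short computation gives $\rho_1<\rho_3<\rho_2$ for $a,b>1$ (one finds $\rho_2-\rho_1=b/a>0$ and $\rho_2-\rho_3=\tfrac{b^2}{a(a^2+b)}>0$, $\rho_3-\rho_1=\tfrac{ab}{a^2+b}>0$). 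Since a nonzero real quadratic cannot vanish at three points, the only constant linear combination of $1,\rho_i,\rho_i^2$ vanishing for $i=1,2,3$ is the trivial one; hence the three coefficient equations hold. They give $h=-fb^2$, $g(1+b)=-2abf$ (so $g=-\tfrac{2ab}{b+1}f$), and then the first equation collapses to $f\bigl(a^2+1-b^2\bigr)=\tfrac{2a^2b}{b+1}f$.

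The third step disposes of the degenerate case $f=0$: it forces $h=0$, and since $b>1$ makes $1+b\neq 0$ it also forces $g=0$, whence $P(i)\equiv 0\neq(-1)^i$, a contradiction. So $f\neq 0$, and cancelling $f$ yields the numerical constraint
\[
a^2+1-b^2=\frac{2a^2b}{b+1}.
\]
Finally, since $b>1$ we have $1-b^2<0$, so the left-hand side is strictly less than $a^2$; as $a^2>0$ this forces $\tfrac{2b}{b+1}<1$, i.e.\ $2b<b+1$, i.e.\ $b<1$, contradicting $b>1$. Hence no such second-order ``Cassini-type'' identity exists for $a,b>1$, and therefore the norm-form / Pell-equation analysis of the previous sections, which relied on having such an identity, cannot be carried through in this regime — which is the non-existence claim. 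The only place requiring real care is the independence argument of the second paragraph; once that is in hand, the rest is the brief elimination above.
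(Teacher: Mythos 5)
Your proposal is correct and follows essentially the same route as the paper: expand $P(i)+P(i+1)=0$ using $e(i+2)=ae(i+1)+be(i)$, set the three resulting coefficients to zero to obtain $h=-fb^{2}$, $g=-\tfrac{2ab}{b+1}f$, and the constraint $a^{2}+1-b^{2}=\tfrac{2a^{2}b}{b+1}$, which forces $b<1$, a contradiction. Your write-up is in fact tighter than the paper's: you justify the ``equating the terms'' step by exhibiting three distinct ratios $\rho_{1}<\rho_{3}<\rho_{2}$ so that the quadratic in $\rho$ must vanish identically (the paper simply asserts the coefficient matching), and you dispose of the degenerate case $f=0$ explicitly rather than normalizing $f=1$ without comment as the paper does.
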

Now the question remains whether there exists higher degree polynomials for such recurrences.
\section{Search for higher degree polynomials}
So the question posed in the previous section is the existence of higher degree polynomials whose solution or subset of solutions is the members of the sequence
$e(n)=ae(n-1)+be(n-2)$. So this polynomial must be a function of $a$ and $b$ and hence let us put $a,b=1$ and search for these kind of polynomials for fibonacci sequence
itself. So let us take the assumption that there exists a recurrence in the degree $n$ for the fibonacci sequence.
Let us define the expression 
$$X(i)=a_{0}(f(i+1))^{n}+a_{1}(f(i+1))^{n-1}f(i)+a_{2}(f(i+1))^{n-2}f(i)^{2}+\cdots+a_{n}(f(i))^{n}=(-1)^{i}C \text{ for some positive integer C}$$ So if this holds
then $X(i+1)=-X(i)$ must hold. We can write
$$X(i+1)=a_{0}(f(i+2))^{n}+a_{1}(f(i+2))^{n-1}f(i+1)+a_{2}(f(i+2))^{n-2}f(i+1)^{2}+\cdots+a_{n}(f(i+1))^{n}$$
Now if we use $f(i+2)=f(i+1)+f(i)$ and collecting the terms, we must have\\
$$C((f(i+1))^{n-r}f(i)^{r})=a_{0}\binom{n}{r}+a_{1}\binom{n-1}{r}+\cdots+a_{n-r}\binom{r}{r}$$
where C(.) represents the coefficient of that particular term. Because of the required recurrence on X, we must have
$$C((f(i+1))^{n-r}f(i)^{r})=-a_{r}$$
So, for a positive integer n, we have n+1 variables and therefore n+1 equations. Hence the resultant system can be written in matrix form
$$
\begin{bmatrix}
 2 & 1 & 1 & \dots & 1\\
 \binom{n}{1} & 1+\binom{n-1}{1} & \binom {n-2}{1} & \dots & 0\\
 \binom{n}{2} & \binom{n-1}{2} & \binom{n-2}{2}+1\dots & 0 & 0\\
 \vdots & \vdots & \vdots & \vdots & \vdots\\
 \binom{n}{n} & 0 & 0 &\dots & 1
\end{bmatrix}
\begin{bmatrix}
 a_{0} \\ a_{1} \\ a_{2} \\ \vdots \\ a_{n} 
\end{bmatrix}
=
\begin{bmatrix}
 0 \\ 0 \\ 0 \\ \vdots \\ 0 
\end{bmatrix}
$$
To give a few examples we have for $n=2$,
$$
\begin{array}{rcl}
 2a_{0}+a_{1}+a_{2}=0\\
 2a_{0}+2a_{1}=0\\
 a_{0}+a_{2}=0
\end{array}
$$
In this case the system of equations is not of full rank and hence we have $a_{0}=-a{1}=-a{2}$. Again for $n=3$, we have
$$
\begin{array}{rcl}
 2a_{0}+a_{1}+a_{2}+a_{3}=0\\
 3a_{0}+3a_{1}+a_{2}=0\\
 3a_{0}+a_{1}+a_{2}=0\\
 a_{0}+a_{3}=0
\end{array}
$$
In this case the system of equation is of full rank and we have $a_{0}=a_{1}=a_{2}=a_{3}=0$. So can we generalize this for any arbitrary n? From the nature of the 
equations we see that the matrix is almost row reduced i.e every row has a lesser number of variables than the previous one except 2 rows in the middle. To investigate,
we take up 2 cases.\\
\textit{Case I: n is odd}\\ In this case the 2 equations having same number of variable and also the same variables is for $r=\dfrac{n+1}{2}$ and $r=\dfrac{n-1}{2}$. We can write them as
$$
\begin{array}{rcl}
\binom{n}{\tfrac{n-1}{2}}a_{0}+\binom{n-1}{\tfrac{n-1}{2}}a_{1}+\cdots+a_{\tfrac{n+1}{2}}&=&0\\
\binom{n}{\tfrac{n+1}{2}}a_{0}+\binom{n-1}{\tfrac{n+1}{2}}a_{1}+\cdots+a_{\tfrac{n-1}{2}}+a_{\tfrac{n+1}{2}}&=&0
\end{array}
$$
Hence the only chance for the entire matrix to be indeterminate if these 2 equations are integer multiples of each other which they are clearly not. Hence there does
not exist such recurrences for fibonacci series for degree $n=odd$]\\
\textit{Case II: n is even}\\
The analysis is almost same as for $n=odd$ but in this casethere exists a system of 3 equations which is indeterminate iff the entire matrix is not of full rank. The
3 equations are for $r=\dfrac{n}{2}$,$r=\dfrac{n}{2}+1$ and $r=\dfrac{n}{2}-1$. We can write the 3 equations as\\
$$
\begin{array}{rcl}
\binom{n}{\tfrac{n}{2}-1}a_{0}+\binom{n-1}{\tfrac{n}{2}-1}a_{1}+\cdots+a_{\tfrac{n}{2}+1}=0\\
\binom{n}{\tfrac{n}{2}}a_{0}+\binom{n-1}{\tfrac{n}{2}}a_{1}+\cdots+2a_{\tfrac{n}{2}}=0\\
\binom{n}{\tfrac{n}{2}+1}a_{0}+\binom{n-1}{\tfrac{n}{2}+1}a_{1}+\cdots+a_{\tfrac{n}{2}-1}+a_{\tfrac{n}{2}+1}=0
\end{array}
$$
Again this system of equations does not have full rank only for $n=2$ and for any other $n$ all a's must be 0. Hence we can conclude that there does not exist any recurrence
of such kind for any $n>2$. Hence we can conclude that 
\begin{lemma}
There does not exist any polynomial with the properties discussed in the previous sections of any higher degree in 2 variables
for the fibonacci series and therefore for the general series. 
\end{lemma}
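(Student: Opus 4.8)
The plan is to reduce the assertion to a single rank statement and then settle that by identifying the coefficient matrix with a classical object. Keeping the notation of the excerpt, a degree-$n$ ``recurrence polynomial'' is a nonzero homogeneous form $X(u,v)=\sum_{k=0}^{n}a_k u^{n-k}v^{k}$ (with $u=f(i{+}1)$, $v=f(i)$) for which the required $X(i{+}1)=-X(i)$, once $f(i{+}2)=f(i{+}1)+f(i)$ is substituted and monomials are matched, becomes $X\circ A=-X$, where $A=\left(\begin{smallmatrix}1&1\\1&0\end{smallmatrix}\right)$ sends $(u,v)\mapsto(u+v,u)$. In the monomial basis this is exactly the system $Mv=0$ of the excerpt, with $M=I+B$ and $B_{r,k}=\binom{n-k}{r}$. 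So everything comes down to showing $M$ is invertible for $n>2$, i.e.\ that $-1$ is not an eigenvalue of $B$.

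First I would prove the structural fact that $B$ is the matrix, in the monomial basis, of $X\mapsto X\circ A$ on binary forms of degree $n$: this is the direct check $(u^{n-k}v^{k})\circ A=(u+v)^{n-k}u^{k}=\sum_j\binom{n-k}{j}u^{n-j}v^{j}$. Hence $B$ is the $n$-th symmetric power of $A$, so its eigenvalues are exactly $\{\phi^{j}\psi^{\,n-j}:0\le j\le n\}$, where $\phi,\psi$ are the roots of $x^{2}-x-1$; the eigenline for $\phi^{j}\psi^{\,n-j}$ is the $j$-th mixed monomial in the two eigenlines of $A$, and for $n$ even with $j=n/2$ it is a scalar multiple of $(u^{2}-uv-v^{2})^{n/2}$. (A more pedestrian route avoids representation theory: reverse the row order of $M$, write $M'=L+E$ with $L$ lower triangular with unit-or-$2$ diagonal and $E$ the above-diagonal anti-diagonal coming from the $+I$ bumps, and compute $\det M'=\det L\cdot\det(I+L^{-1}E)$ — but this still requires pinning down the rank-$\lceil n/2\rceil$ matrix $L^{-1}E$ and its spectrum, which is where the real content lies.)

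Then I would solve $\phi^{j}\psi^{\,n-j}=-1$ for $0\le j\le n$. Using $\phi\psi=-1$ this rewrites as $(-1)^{\min(j,n-j)}\rho^{\,|n-2j|}=-1$ with $\rho\in\{\phi,\psi\}$; since $|\psi|<1<\phi$, the factor $\rho^{\,|n-2j|}$ is a unit only when $n=2j$, and then the value is $(-1)^{j}$. So $B$ has eigenvalue $-1$ iff $n\equiv 2\pmod 4$; for every other $n>2$ (all odd $n$, and all $n\equiv 0\pmod 4$) $M$ is invertible, forcing $a_0=\dots=a_n=0$, hence $C=0$, contradicting $C>0$. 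The same argument applies to $e(n)=ae(n-1)+e(n-2)$ after replacing $A$ by $\left(\begin{smallmatrix}a&1\\1&0\end{smallmatrix}\right)$, whose eigenvalues still multiply to $-1$.

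The step I expect to be the main obstacle is precisely the class $n\equiv 2\pmod 4$, where the argument does not merely become harder but genuinely fails: since $(u^{2}-uv-v^{2})\circ A=-(u^{2}-uv-v^{2})$ and $n/2$ is odd, $X=(u^{2}-uv-v^{2})^{n/2}$ is a nonzero degree-$n$ recurrence polynomial with $C=1$, and $M$ is rank deficient — the excerpt's $n=2$ case being the first member of this infinite family. So to prove the displayed statement one must add the hypothesis $n\not\equiv 2\pmod 4$, or narrow ``polynomial with the properties discussed'' so as to exclude powers and integer combinations of the fundamental quadratic recurrence $u^{2}-uv-v^{2}$; under that restriction the eigenvalue computation above, together with the one-dimensionality of each eigenspace of $B$, does close the argument. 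In either case the gap to repair in the Case I / Case II sketch is the inference that two non-proportional middle rows already force full rank: that rules out only one two-dimensional degeneracy, not the full $\lceil n/2\rceil$-dimensional perturbation the $+I$ part introduces.
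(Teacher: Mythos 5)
Your proposal is essentially correct, it takes a genuinely different route from the paper's, and --- this is the important point --- it exposes a real error in the paper's own argument and in the statement itself as it applies to the Fibonacci series. The paper infers full rank of the whole $(n+1)\times(n+1)$ system from the non-proportionality of the two (for $n$ odd) or three (for $n$ even) ``middle'' rows; as you say, that rules out only one very particular degeneracy. Your reformulation --- the coefficient matrix is $I+B$ with $B$ the $n$-th symmetric power of $\bigl(\begin{smallmatrix}1&1\\1&0\end{smallmatrix}\bigr)$, so a nonzero solution exists iff $-1\in\{\phi^{j}\psi^{n-j}\}$, which by $\phi\psi=-1$ and $|\psi|<1<\phi$ happens exactly when $n\equiv 2\pmod 4$ --- is correct, and the kernel element you exhibit is genuine. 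Concretely, for $n=6$ the vector $(a_0,\dots,a_6)=(1,-3,0,5,0,-3,-1)$, i.e.\ the expansion of $\bigl(f(i+1)^{2}-f(i+1)f(i)-f(i)^{2}\bigr)^{3}$, satisfies all seven of the paper's equations (e.g.\ the $r=3$ row reads $20a_0+10a_1+4a_2+2a_3=20-30+0+10=0$) and gives $X(i)=(-1)^{i}$, hence a valid degree-$6$ recurrence with $C=1$ and a valid polynomial $y\bigl(2-(x^{2}+xy-y^{2})^{6}\bigr)$ whose positive values are exactly the Fibonacci numbers. So the claimed non-existence ``for the fibonacci series'' fails for every $n\equiv 2\pmod 4$; the paper's $n=2$ exception is merely the first member of this infinite family.

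What survives, and what you prove cleanly, is non-existence for all odd $n$ and all $n\equiv 0\pmod 4$, where $I+B$ is invertible and $a_0=\cdots=a_n=0$ follows. Your proposed repair is the right one: either exclude $n\equiv 2\pmod 4$ from the statement, or rephrase it so that powers of the fundamental quadratic $x^{2}+xy-y^{2}$ do not count as new polynomials (the one-dimensionality of the $(-1)$-eigenspace of $B$ shows these powers are the \emph{only} obstruction). Note, though, that the conclusion the paper actually needs --- non-existence for the general recurrence with $a,b>1$ --- is reached only by specializing to $a=b=1$, and that reduction now yields no information at degrees $n\equiv 2\pmod 4$; closing the general case would require running your eigenvalue argument directly with $A=\bigl(\begin{smallmatrix}a&b\\1&0\end{smallmatrix}\bigr)$, where $\lambda_1\lambda_2=-b$, which the paper does not do. The one small step to tighten in your write-up is the passage from ``$X(i+1)=-X(i)$ for all $i$'' to the coefficientwise identity $X\circ A=-X$: this needs the remark that the ratios $f(i+1)/f(i)$ take infinitely many distinct values, so a nonzero binary form cannot vanish at all of them. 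With that noted, your analysis is the correct one and should replace the paper's Case I / Case II argument.
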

So we must analyse the existence of polynomials with 3 variables. Interestingly Vella and Vella showed in [12] the modified version of Cassini's identity for the generalized sequences.
$$f(i+2)f(i)-f(i+1)^{2}=(-b)^{i}(f(2)f(0)-f(1)^{2})$$
So replacing $f(i+2)=af(i+1)+bf(i)$ and $f(1)=1$, $f(0)=0$ we get 
$$bf(i)^{2}+af(i+1)f(i)-f(i+1)^{2}+(-b)^{i}=0$$
Therefore the next question to be answered is whether the polynomial $bx^{2}+axy-y^{2}+z=0$ has only triplets of the form $(f(i),f(i+1),(-b)^{i})$ as the only solutions.
Evidently it is not since for $a=b=1$ the triplet $(p,1,p(p+1)+1)$ also forms a solution for any integer $p$. So this is not an one to one function unlike the previous cases.
\section{Conclusions and Future work}
This paper attempts to answer difficult questions on the existence of lower degree or fewer variable polynomial representations of certain interesting sets such as the generalized Fibonacci numbers (where the positive range is either Z+ or injective into Z+).
We have described some new techniques in this process, and have come up with the concept of fundamental starting numbers. The future scope of work can be vast since similar investigations can be attempted for different kinds of recurrences, polynomials, or interesting sets such as the primes.  

\end{document}